\newtheorem{theorem}{Theorem}
\newtheorem{proposition}[theorem]{Proposition}
\theoremstyle{definition}
\newtheorem{example}[theorem]{Example}
\newtheorem{remark}[theorem]{Remark}
\newtheorem{definition}[theorem]{Definition}
\newcommand{\RR}{\mathbb{R}}
\newcommand{\NN}{\mathbb{N}}
\newcommand{\ZZ}{\mathbb{Z}}
\DeclareMathOperator{\LL}{L}
\DeclareMathOperator*{\esssup}{ess\,sup}
\newcommand{\hilbert}{\mathcal{H}}
\newcommand{\repr}{{\mathcal{R}}}
\newcommand{\uSquare}{\mathbb{K}}
\newcommand{\sqr}{{\uSquare + \PhVarB}}
\DeclareMathOperator{\Id}{Id}
\newcommand{\osc}{\Omega} %% 'oscillation'/variation of the kernel K^{(x,y)}, including its Schur norm
\newcommand{\discStar}{\mathcal{D}^*}
\newcommand{\discTrans}{{\mathcal{D}^*_{\mathrm{shift}}}}
\newcommand{\discDil}{{\mathcal{D}^*_{\mathrm{dil}}}}
\newcommand{\PhVar}{\eta}
\newcommand{\PhVarA}{\nu}
\newcommand{\PhVarB}{{\rho}}
\newcommand{\PhSpace}{X}
\newcommand{\PhSpaceDis}{\Lambda}
\def\BibTeX{{\rm B\kern-.05em{\sc i\kern-.025em b}\kern-.08em
    T\kern-.1667em\lower.7ex\hbox{E}\kern-.125emX}}
\begin{document}
\author{
\IEEEauthorblockN{Jan Zimmermann\IEEEauthorrefmark{1}, Andreas Klotz, and Nicki Holighaus}
%\vspace{0.05in}
\IEEEauthorblockA{Acoustics Research Institute, Austrian Academy of Sciences, Dominikanerbastei 16, 1010 Wien, Austria.}
E-Mail: \emph{\{jan.zimmermann, andreas.klotz, nicki.holighaus\}@oeaw.ac.at}
\IEEEauthorblockA{\IEEEauthorrefmark{1}\emph{Corresponding Author}
}
}
\date{\today}
\title{Discretization of Continuous Frames by Quasi-Monte Carlo Methods
\thanks{This work was supported by the Austrian Science Fund (FWF) [\href{https://doi.org/10.55776/PAT4780023}{10.55776/PAT4780023}]}}

\maketitle

\begin{abstract}
We introduce a discretization scheme for continuous localized frames using quasi–Monte Carlo integration and discrepancy theory. By generalizing classical concepts, we define a discrepancy measure on the entire phase space $\mathbb R^2$
  and establish a corresponding Koksma–Hlawka inequality. This approach enables control over the density of the discretized frame and ensures the universality of the sampling set, relying only on the discrepancy of the sampling set and on the Sobolev-type seminorm of an iterated kernel rather than on specific frame properties.
\end{abstract}

\begin{IEEEkeywords}
  localized frames, frame discretization, Quasi-Monte Carlo method, discrepancy, Koksma-Hlawka inequality  
\end{IEEEkeywords}

\section{Introduction}

Continuous frames play a fundamental role in harmonic analysis and signal processing. Let \(\mathcal{H}\) be a separable Hilbert space and \((\PhSpace,\mu)\) a measure space. A family
$
\Psi = (\psi_\PhVar)_{\PhVar\in\PhSpace} \subset \mathcal{H}
$
is called a \emph{Parseval frame} if the mapping \(\PhVar\mapsto \psi_\PhVar\) is weakly measurable and
\[
\|f\|_{\mathcal{H}}^2 = \int_{\PhSpace} |\langle f,\psi_\PhVar\rangle|^2 \, d\mu(\PhVar),\quad \text{for all } f\in\mathcal{H}.
\]
In this setting the analysis operator
\[
C_\Psi: \mathcal{H} \to \LL^2(\PhSpace),\quad f\mapsto (\langle f,\psi_\PhVar\rangle)_{\PhVar\in\PhSpace},
\]
acts as an isometry onto the reproducing kernel Hilbert space \(\mathcal{R} := C_\Psi(\mathcal{H})\) with reproducing kernel
\begin{equation}\label{eq:reproducingKernel}
R(\PhVarA,\PhVar)=\langle \psi_\PhVar,\psi_{\PhVarA}\rangle.
\end{equation}

The discretization of continuous frames—that is, replacing the integral in the frame condition by a discrete sum over a set \(\PhSpaceDis\subset \PhSpace\), such that 
\[ A \norm{f}_\hilbert^2 \leq \sum_{\lambda\in\Lambda}  |\langle f, \psi_\lambda  \rangle|^2 \leq B \norm{f}_\hilbert^2 \]
for some constants (the \emph{frame bounds})  $0<A\leq B< \infty$ and all $f\in\hilbert$ ---
has been solved in great generality by Freeman and Speegle \cite{Freeman2019}. However, in their framework, the phase space is only assumed to carry a measure, so no further information on the quality of the discretization beyond the ratio of the frame bound $B/A$ can be expected.

For \emph{localized frames}, the situation is considerably more refined. Here the phase space \(\PhSpace\) is endowed with a metric, and the associated reproducing kernel \(R\) exhibits decay properties. In particular, one typically requires that \(R\) satisfies Schur’s test, i.e. that the Schur norm of $R$, given by
\[
\norm{R}_{\mathcal{A}_1}= \esssup_{\PhVarA\in\PhSpace}  \int_{\PhSpace}|R(\PhVarA,\PhVar)| d\PhVar,
  \]
is finite. Under such localization and for sufficiently ``tame'' geometry of the  phase space, one may introduce a notion of \emph{sampling density}---the average number of samples per unit volume---and even identify a critical density \(D^{-}\) dictated by the geometry of \(\PhSpace\), see~\cite{Fuehr2017} for details. In this context, discretizations can be compared in terms of how closely their sampling densities approach \(D^{-}\) while maintaining a small frame bound ratio.

A promising strategy to achieve favorable sampling sets
is to select sampling points that are as uniformly distributed as possible, relative to the geometry of \(\PhSpace\). In numerical integration, this quality of a point set is measured by its \emph{discrepancy}. In the simplest setting the \emph{quasi--Monte Carlo} (QMC) method approximates an integral of a function $h$ over \([0,1]^d\) by
\[
\int_{[0,1]^d} h(\PhVar)\,d\PhVar \approx \frac{1}{N} \sum_{n=1}^N h(p_n),
\]
for  a set of sample points $\mathcal{P}=\{p_1,\ldots,p_N\}\subset [0,1]^d$. A \emph{Koksma--Hlawka inequality} links the integration error to the \emph{star discrepancy} of the point set $\mathcal{P}$,
\[
  \mathcal{D}^*(\mathcal{P})= \sup_{\PhVar\in [0,1]^d} \left|\frac{\#(\mathcal{P}\cap [0,\PhVar])}{N} - \mu([0,\PhVar])\right| \,,
\]
where $\mu$ is the Lebesgue measure, and a measure of the smoothness of \(h\). For example, in two dimensions one has:

\medskip
\begin{theorem}[{Koksma--Hlawka inequality \cite[Prop. 2.18]{Dick2010}}]\label{thm:koksmaHlawka}
Let \(\mathcal{P}\subset [0,1]^2\) be a set of \(N\) points, and let \(h:[0,1]^2\to\mathbb{C}\) be a function with continuous mixed partial derivatives $\partial_1 h$, $\partial_2 h$ and $\partial_{12}h$. Then the quadrature error
\[
e_0(h,\mathcal{P}) = \int_{[0,1]^2} h(\PhVar)\,d\PhVar - \frac{1}{N} \sum_{p\in\mathcal{P}} h(p)
\]
satisfies
\begin{align*}
  \bigl|e_0(h,\mathcal{P})\bigr| &\leq \mathcal{D}^*(\mathcal{P}) 
  \Biggl[ \int_{[0,1]^2} |\partial_{12}h(\PhVar)|\,d\PhVar \\
   &+ \int_0^1 |\partial_1 h(\PhVar_1,1)|\,d\PhVar_1 + \int_0^1 |\partial_2 h(1,\PhVar_2)|\,d\PhVar_2 \Biggr].
\end{align*}
\end{theorem}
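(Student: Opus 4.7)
The plan is to derive the inequality from a two-dimensional Newton--Leibniz representation of $h$ and then recognize the usual local discrepancy in each resulting term.

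First, I would apply the fundamental theorem of calculus twice (using that $\partial_1h$, $\partial_2h$, $\partial_{12}h$ are continuous) to write, for every $\PhVar=(\PhVar_1,\PhVar_2)\in[0,1]^2$,
\begin{align*}
  h(\PhVar) = h(1,1) &- \int_{\PhVar_1}^1 \partial_1 h(t,1)\,dt - \int_{\PhVar_2}^1 \partial_2 h(1,s)\,ds \\ & + \int_{\PhVar_1}^1\!\!\int_{\PhVar_2}^1 \partial_{12}h(t,s)\,ds\,dt.
\end{align*}
Rewriting the integrals in terms of indicator functions of upper rectangles yields, e.g., $\int_{\PhVar_1}^1 \partial_1 h(t,1)\,dt = \int_0^1 \mathbf{1}_{[0,t]}(\PhVar_1)\,\partial_1 h(t,1)\,dt$ and analogously in the other cases, so that every $\PhVar$-dependence is carried by a product of indicator functions $\mathbf{1}_{[0,t]\times[0,s]}(\PhVar)$ (with $t$ or $s$ possibly equal to $1$).

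Next I would substitute this representation into the quadrature error
\[
  e_0(h,\mathcal{P}) = \int_{[0,1]^2} h(\PhVar)\,d\PhVar - \frac{1}{N}\sum_{p\in\mathcal{P}} h(p),
\]
and interchange the order of integration/summation via Fubini (justified by the continuity assumptions on the derivatives and the finiteness of $\mathcal{P}$). The constant term $h(1,1)$ cancels, and each of the other three terms produces a factor of the form
\[
  \delta(t,s) := \mu([0,t]\times[0,s]) - \frac{\#(\mathcal{P}\cap[0,t]\times[0,s])}{N},
\]
with $s=1$ in the first one-dimensional term, $t=1$ in the second, and both free in the $\partial_{12}h$ term.

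Finally, by the definition of $\discStar(\mathcal{P})$ one has $|\delta(t,s)|\le \discStar(\mathcal{P})$ uniformly in $(t,s)\in[0,1]^2$, so pulling this constant out and applying the triangle inequality to each of the three integrals gives exactly the stated bound. I do not expect a conceptual obstacle here; the only care needed is bookkeeping of the signs in the two-dimensional Newton--Leibniz formula and verifying that the axis-aligned boxes $[0,t]\times[0,1]$ and $[0,1]\times[0,s]$ are admissible in the supremum defining $\discStar(\mathcal{P})$, which they are since the sup runs over all $\PhVar\in[0,1]^2$.
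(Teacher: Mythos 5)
Your proof is correct and follows essentially the same route as the cited source: the paper states this theorem without proof, referring to Dick--Pillichshammer, where it is derived exactly via the two-dimensional Newton--Leibniz (Hlawka--Zaremba) representation anchored at $(1,1)$, cancellation of the constant term, and the identification of the local discrepancy $\delta(t,s)$ at the anchors $(t,1)$, $(1,s)$ and $(t,s)$. All steps in your argument check out, including the observation that the boxes $[0,t]\times[0,1]$ and $[0,1]\times[0,s]$ are admissible in the supremum defining $\mathcal{D}^*(\mathcal{P})$.
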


Low discrepancy sets thus yield small quadrature errors. Classical constructions achieve star discrepancies of order \(\mathcal{O}((\log N)^{d-1}/N)\) in dimension \(d\).

In this work we combine ideas from frame theory and quasi--Monte Carlo integration to obtain new discretization results for continuous, localized frames.
Our first key observation is that the Schur norm $\,\norm{\bullet}_{\mathcal{A}_ 1}$ of the difference between the reproducing kernel
and its discretization can be expressed as the supremum of a family of quadrature errors. On compact subsets of the phase space, these errors are controlled by the product of the star discrepancy of the sampling set and a mixed Sobolev norm 
of an associated kernel.

Our contributions are as follows.  First, we extend the Koksma--Hlawka (KH)  inequality to integrals over the entire Euclidean space $\RR^2$ and adapt the definition of star discrepancy to this new setting.   By invoking results of Skriganov~\cite{Skriganov1994}, we  can infer that for certain lattices \(\PhSpaceDis\subset \mathbb{R}^2\), the point set $a\Lambda$ achieves an asymptotic global discrepancy of order
\[ \mathcal{O} \big( a^2 \ln(2+a^{-1})\big) \]
for $a\to 0$, see Section \ref{sec:admissibleLattices}.
This extension represents, to the best of our knowledge, the first instance of QMC-type integration on a space of infinite measure. Second, we apply this framework to derive a new discretization result for continuous frames in suitable localization classes (Theorem \ref{thm:sufficientConditionDiscretization}). In particular, this result implies \emph{universality} of the sampling set: The existence of a frame discretization for a frame $\Psi$ and sampling set $\Lambda$ only depends on its discrepancy and on a Sobolev like seminorm on its reproducing kernel, not on specifities of the kernel itself.

As a proof of concept, we discretize the short-time Fourier transform (STFT) and demonstrate that any lattice that is admissible in the sense of Skriganov~\cite{Skriganov1994} yields discretizations with universal frame bounds invariant under arbitrary dilations of the generating Gabor window.

By bridging continuous frame theory and quasi--Monte Carlo integration, our work provides a systematic approach to constructing discrete frames with controlled density and stability, thereby enhancing both the theoretical understanding and practical implementation of frame discretization.

\section{From Continuous Frames to Quasi-Monte Carlo}\label{sec:framesToQMC}

From now on we assume that the continuous frame $\Psi$ is defined on the Euclidean plane $X=\RR^2$, equipped with the Lebesgue measure $\mu$. The following approach to frame discretization can be found in many places, see for instance~\cite{fornasier2005,Groechenig1991,Dahlke2008}. Let $\Lambda\subset\RR^2$ be a discrete set and $(a_\lambda)_{\lambda\in\Lambda}$ family of positive numbers. We want to find a sufficient condition for the family $(\sqrt{a_\lambda}\psi_\lambda)_{\lambda\in \Lambda}$ to be a discrete frame for $\hilbert$, that is, to satisfy
\[ A \norm{f}_\hilbert^2 \leq \sum_{\lambda\in\Lambda} a_\lambda |\langle f, \psi_\lambda  \rangle|^2 \leq B \norm{f}_\hilbert^2 \]
for some constants $0<A\leq B< \infty$ and all $f\in\hilbert$. The inclusion of the weights $(a_\lambda)_{\lambda\in\Lambda}$ above will become relevant in Section \ref{sec:koksmaHlawka}. 
This is the case 
if and only if the frame operator
\[ S_\Lambda:\hilbert\to\hilbert, \quad S_\Lambda f = \sum_{\lambda\in \Lambda} a_\lambda \langle f, \psi_\lambda\rangle \psi_\lambda \]
is bounded and stably invertible. The optimal frame bounds are then  $A=\|S_\Lambda^{-1}\|_{\hilbert\to\hilbert}^{-1}$ and  $B=\norm{S_\Lambda}_{\hilbert\to\hilbert}$, cf. ~\cite[Prop. 5.4.4]{Christensen2016}.

As the analysis operator $C_\Psi:\hilbert\to\repr$ is an isometry, the push forward of  $S_\Lambda$ by $C_\Psi$, which is given by $T_\Lambda = C_\Psi S_\Lambda C_\Psi^{-1}$, has the same spectral properties as $S_\Lambda$. In particular, we have $\|T_\Lambda^{\pm 1}\|_{\repr\to\repr} = \|S_\Lambda^{\pm 1}\|_{\hilbert\to\hilbert}$. A short computation reveals that $T_\Lambda$ can be written as an integral operator
\[ T_\Lambda F(\PhVar) = \int_{\RR^2} R_\Lambda(\PhVar,\PhVarA) F(\PhVarA) \,d\PhVarA,\quad F\in \repr \]
with the kernel
\[ R_\Lambda(\PhVar,\PhVarA) = \sum_{\lambda\in \Lambda} a_\lambda R(\PhVar,\lambda) R(\lambda,\PhVarA), \quad \PhVar, \PhVarA\in\RR^2\,, \]
$R$ being the reproducing kernel (\ref{eq:reproducingKernel}).

By the standard Neumann series argument, stable invertibility of $T_\Lambda$ follows if   $\norm{\Id_\repr - T_\Lambda}_{\repr\to\repr}<1$.
As 
$  F(\PhVar) = \int_{\RR^2} R(\PhVar,\PhVarA) F(\PhVarA) \,d\PhVarA $
for $F \in \repr$ we have to bound the operator norm of
\[ F \mapsto \int_{\RR^2} \left( R(\,\cdot\,,\PhVarA) - R_\Lambda(\,\cdot\,,\PhVarA) \right) F(\PhVarA) \,d\PhVarA \]
on $\repr$ from above.  Schur's test yields 
\begin{equation}\label{eq:SchurTest}
\| \Id_\repr - T_\Lambda \|_{\repr\to\repr} \leq \esssup_{\PhVarA\in \RR^2} \int_{\RR^2} \left| R(\PhVar,\PhVarA) - R_\Lambda(\PhVar,\PhVarA) \right| \,d\PhVar.
\end{equation}
%(note that $R-R_\Lambda$ is conjugate symmetric, so we only need a single estimate for the Schur test).

We note that the reproducing kernel $R(\PhVar,\PhVarA)$ itself satisfies the reproducing formula $R(\PhVar,\PhVarA) = \int_{\RR^2} R(\PhVar,\PhVarB)R(\PhVarB,\PhVarA)\,d\PhVarB$. Thus, for fixed $\PhVar,\PhVarA\in\RR^2$, the integrand on the right hand side of (\ref{eq:SchurTest}) is equal to
\begin{align}\label{eq:quadratureOfReproducingKernel}
\left| \int_{\RR^2} R(\PhVar, \PhVarB) R(\PhVarB,\PhVarA) \,d\PhVarB - \sum_{\lambda\in\Lambda} a_\lambda R(\PhVar,\lambda) R(\lambda,\PhVarA) \right|.
\end{align}

Let us now write the  weighted quadrature error of a continuous function $h\in \LL^1(\RR^2)$ as
\begin{equation}\label{eq:quadratureErrorDef}
e(h,\Lambda) = \int_{\RR^2} h(\PhVarB) \,d\PhVarB - \sum_{\lambda\in\Lambda} a_\lambda h(\lambda).
\end{equation}
 With the notation  $K^{(\PhVar,\PhVarA)} = R(\PhVar, \,\cdot\,) R(\,\cdot\, ,\PhVarA)$, the expression (\ref{eq:quadratureOfReproducingKernel}) can then be identified as the quadrature error  $|e( K^{(\PhVar,\PhVarA)}, \Lambda) |$.
 
\begin{proposition}\label{prop:discretizationSufficientCondition}
If the kernel $K^{(\PhVar,\PhVarA)}$ satisfies the condition
\[ \epsilon = \esssup_{\PhVarA\in\RR^2} \int_{\RR^2} \left| e\big( K^{(\PhVar,\PhVarA)}, \Lambda \big) \right| \,d\PhVar < 1, \]
then $(\sqrt{a_\lambda}\psi_\lambda)_{\lambda\in\Lambda}$ is a discrete frame for $\hilbert$ with the frame bounds $1-\epsilon$ and $1+\epsilon$.
\end{proposition}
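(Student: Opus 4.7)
The preceding discussion has already assembled all the ingredients; the plan is simply to thread them together and then extract the two-sided frame inequality from the resulting operator-norm estimate.

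First, I would identify each integrand appearing in the Schur bound \eqref{eq:SchurTest} with the quadrature error \eqref{eq:quadratureErrorDef} applied to $K^{(\PhVar,\PhVarA)}$, using the reproducing formula satisfied by $R$ as in \eqref{eq:quadratureOfReproducingKernel}. This yields
\[
\| \Id_\repr - T_\Lambda \|_{\repr\to\repr} \;\leq\; \esssup_{\PhVarA\in\RR^2} \int_{\RR^2} \bigl|e(K^{(\PhVar,\PhVarA)},\Lambda)\bigr| \, d\PhVar \;=\; \epsilon \;<\; 1 .
\]

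Next, I would invoke the standard Neumann series argument to conclude that $T_\Lambda$ is boundedly invertible on $\repr$, with $\|T_\Lambda\|_{\repr\to\repr} \leq 1+\epsilon$ and $\|T_\Lambda^{-1}\|_{\repr\to\repr} \leq (1-\epsilon)^{-1}$. Pushing these back via the isometry $C_\Psi$ and the intertwining relation $T_\Lambda = C_\Psi S_\Lambda C_\Psi^{-1}$, we inherit the same bounds for $S_\Lambda$ on $\hilbert$. The optimal frame bounds recalled from \cite[Prop. 5.4.4]{Christensen2016} then satisfy $A \geq (1-\epsilon)$ and $B \leq (1+\epsilon)$, and in particular $(\sqrt{a_\lambda}\psi_\lambda)_{\lambda\in\Lambda}$ is a discrete frame for $\hilbert$. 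Unpacking $\langle S_\Lambda f, f\rangle_\hilbert = \sum_\lambda a_\lambda |\langle f, \psi_\lambda\rangle|^2$ gives the claimed inequality.

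I do not anticipate a genuine obstacle: the Schur test, the Neumann series, and the transfer through the analysis operator are all off-the-shelf. The only point that deserves a line of care is that the Schur bound is applied to the kernel $R - R_\Lambda$ of $\Id_\repr - T_\Lambda$, so one should verify that (\ref{eq:SchurTest}) is indeed an upper bound on the operator norm on $\repr$ (not merely on $L^2(\RR^2)$); this is automatic since $\repr \subset L^2(\RR^2)$ isometrically and $\Id_\repr - T_\Lambda$ is the restriction of the corresponding integral operator.
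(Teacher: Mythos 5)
Your proposal is correct and follows essentially the same route as the paper: substituting the quadrature-error identity \eqref{eq:quadratureOfReproducingKernel} into the Schur bound \eqref{eq:SchurTest} to get $\|\Id_\repr - T_\Lambda\|_{\repr\to\repr}\leq\epsilon<1$, then using the Neumann series and the isometry $C_\Psi$ to transfer the bounds $1\pm\epsilon$ to $S_\Lambda$. The paper's proof is merely a terser version of the same argument.
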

\begin{proof}
Resubstituting Equation (\ref{eq:quadratureOfReproducingKernel}) into (\ref{eq:SchurTest}) yields a sufficient condition for the stable invertibility of $T_\Lambda$. Upper and lower frame bounds can be estimated as
 $ \|T_\Lambda\|_{\repr\to\repr}  \leq 1+ \|\Id_\repr - T_\Lambda\|_{\repr\to\repr}$, and  
$ \|T_\Lambda^{-1}\|_{\repr\to\repr}^{-1} \geq 1 - \| \Id_\repr - T_\Lambda \|_{\repr\to\repr}$
\end{proof}

\section{Quadrature Error Estimates for Integrals over $\RR^2$}\label{sec:koksmaHlawka}
In this section, we aim to find an upper bound for the quadrature error as defined in (\ref{eq:quadratureErrorDef}). For that, we prove an analogue to Theorem \ref{thm:koksmaHlawka} for functions on $\RR^2$. However, our approach leads to a \emph{weighted} quadrature rule as in (\ref{eq:quadratureErrorDef}), with weights $(a_\lambda)_{\lambda\in\Lambda}$ that are determined by the local structure of $\Lambda$.

We assume $h\in \LL^1(\RR^2)$ to have continuous mixed partial derivatives $\partial_1 h$, $\partial_2h$ and $\partial_{12}h$. We further assume that each square of side-length $1$ contains at least one point from $\Lambda$. That is, if we write $\uSquare=[0,1]^2$ for the unit square and $\# M$ for the cardinality of some set $M$, we have $N_\PhVarB = \# \Lambda\cap(\sqr) \geq 1$ for all $\PhVarB\in\RR^2$. Note that $N_\PhVarB$ is always finite as $\Lambda$ is discrete.

To find an estimate for $e(h,\Lambda)$, we start with the expression
\begin{equation}\label{eq:partitioning}
\int_{\RR^2} \left( \int_{\sqr} h(\PhVar) \,d\PhVar - \frac{1}{N_\PhVarB} \sum_{\lambda\in \Lambda\cap (\sqr)} h(\lambda) \right) \,d\PhVarB.
\end{equation}
Denoting the characteristic function of the set $\sqr$ by $\chi_{\sqr}$, the integral can immediately be simplified and is equal to
\[ \int_{\RR^2} h(\PhVar) \,d\PhVar - \sum_{\lambda\in\Lambda} h(\lambda) \int_{\RR^2} \frac{\chi_{\sqr}(\lambda)}{N_\PhVarB} \,d\PhVarB. \]
Thus, (\ref{eq:partitioning}) describes the quadrature error $e(h,\Lambda)$ with respect to the weights
\begin{equation}\label{eq:quadratureWeightFormula}
a_\lambda = \int_{\RR^2} \frac{\chi_{\sqr}(\lambda)}{N_\PhVarB} \,d\PhVarB.
\end{equation}

To estimate (\ref{eq:partitioning}) from above, we first take a look at its inner part
\[ e_\PhVarB(h,\Lambda) = \int_{\sqr} h(\PhVar) \,d\PhVar - \frac{1}{N_\PhVarB} \sum_{\lambda\in \Lambda\cap (\sqr)} h(\lambda) \]
for fixed $\PhVarB \in\RR^2$. If we take care of the translation by $\PhVarB$, we can apply Theorem \ref{thm:koksmaHlawka} to this difference. For that, we define the star discrepancy of $\Lambda$ anchored at $\PhVarB$ by
\[ \discStar_\PhVarB(\Lambda) = \sup_{\PhVar\in [0,1]^2} \left| \frac{\# \Lambda\cap \PhVarB+[0,\PhVar]}{N_\PhVarB} - \mu([0,\PhVar]) \right|. \]
Writing $\rho = (\rho_1, \rho_2)$, we obtain the inequality
\begin{multline}\label{eqLocalQuadratureEstimate}
|e_\PhVarB(h,\Lambda)| \leq \discStar_\PhVarB(\Lambda) \Bigg[ \int_0^1 |\partial_1 h(\PhVarB_1+\PhVar_1, \PhVarB_2+1)| \,d\PhVar_1 \\
+ \int_0^1 |\partial_2 h(\PhVarB_1+1, \PhVarB_2+\PhVar_2)| \,d\PhVar_2 + \int_{[0,1]^2} |\partial_{12}h(\PhVarB+\PhVar)| \,d\PhVar \Bigg].
\end{multline}
Now, combining (\ref{eq:partitioning}) with (\ref{eqLocalQuadratureEstimate}) and applying Hölder's inequality yields our global quadrature rule.
\begin{proposition}\label{prop:globalKoksmaHlawka}
Let $\discTrans(\Lambda) = \sup_{\PhVarB\in\RR^2} \discStar_\PhVarB(\Lambda)$. Then we have the inequality
\begin{align*}
|e(h, \Lambda)| &\leq \int_{\RR^2} |e_\PhVarB(h,\Lambda)| \,d\PhVarB \nonumber\\
&\leq \discTrans(\Lambda) \left[ \,\norm{\partial_1 h}_{\LL^1} + \norm{\partial_2 h}_{\LL^1} + \norm{\partial_{12} h}_{\LL^1}\right],
\end{align*}
where the $\LL^1$-norms are taken over all of $\RR^2$.
\end{proposition}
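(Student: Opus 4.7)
The plan is to chain two elementary steps. The first inequality follows immediately from recognizing that the display (\ref{eq:partitioning}) equals $e(h,\Lambda)$ under the weighting (\ref{eq:quadratureWeightFormula}). Writing this as $e(h,\Lambda) = \int_{\RR^2} e_\PhVarB(h,\Lambda)\,d\PhVarB$ and pulling the absolute value inside by the triangle inequality yields $|e(h,\Lambda)| \leq \int_{\RR^2} |e_\PhVarB(h,\Lambda)|\,d\PhVarB$ at once.

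For the second inequality, I would plug the local Koksma--Hlawka estimate (\ref{eqLocalQuadratureEstimate}) under the $\PhVarB$-integral. Bounding $\discStar_\PhVarB(\Lambda) \leq \discTrans(\Lambda)$ pointwise (this is the $\LL^1$--$\LL^\infty$ Hölder step referenced in the text) pulls a constant factor out in front of the integral. What remains is a sum of three non-negative double integrals of the form
\[
\int_{\RR^2} \int_0^1 |\partial_1 h(\PhVarB_1 + \PhVar_1,\, \PhVarB_2 + 1)|\, d\PhVar_1 \, d\PhVarB,
\]
together with the analogous term for $\partial_2 h$ and the term $\int_{\RR^2}\int_{[0,1]^2} |\partial_{12} h(\PhVarB + \PhVar)|\, d\PhVar\, d\PhVarB$ for the mixed derivative. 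Invoking Tonelli's theorem (legitimate because the integrands are non-negative) to swap the order of integration and using translation invariance of Lebesgue measure on $\RR^2$, the inner $\PhVarB$-integral evaluates in each case to the relevant $\LL^1$-norm of the partial derivative, while the outer integration over the unit interval or unit square contributes only a factor of one. Summing the three contributions produces the claimed bound.

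There is no genuine obstacle here; the only item requiring mild care is ensuring that the partial derivatives involved lie in $\LL^1(\RR^2)$, so that the right-hand side is meaningful and Tonelli applies cleanly. This is implicit in the statement, since otherwise the estimate is vacuously true. The argument is, in essence, just the observation that averaging the classical Koksma--Hlawka bound over all translates $\PhVarB$ of the unit square converts the local boundary integrals and mixed-derivative integral into their genuine $\LL^1(\RR^2)$-counterparts, with the discrepancy supremum replacing the individual local discrepancies.
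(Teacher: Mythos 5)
Your proof is correct and follows essentially the same route as the paper: identify the integral of $e_\PhVarB(h,\Lambda)$ over $\PhVarB$ with the weighted error $e(h,\Lambda)$, apply the local Koksma--Hlawka bound (\ref{eqLocalQuadratureEstimate}), pull out $\sup_\PhVarB \discStar_\PhVarB(\Lambda)$ (the H\"older step the paper alludes to), and evaluate the remaining double integrals via Tonelli and translation invariance. Your write-up simply makes explicit the details the paper compresses into one sentence.
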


Together, Propositions \ref{prop:discretizationSufficientCondition} and \ref{prop:globalKoksmaHlawka} imply a sufficient condition for $\Lambda$ to induce a discrete frame as described in Section \ref{sec:framesToQMC}.

\begin{theorem}\label{thm:sufficientConditionDiscretization}
Using the notation from Section \ref{sec:framesToQMC}, assume that $K^{(\PhVar,\PhVarA)}$ has continuous mixed partial derivatives for almost all $\PhVar,\PhVarA\in\RR^2$. Let
\begin{multline}\label{eq:oscillationOfFrame}
\osc(\Psi) = \esssup_{\PhVarA\in\RR^2} \int_{\RR^2} \Big[ \,\norm{\partial_1 K^{(\PhVar,\PhVarA)}}_{\LL^1} \\
+ \norm{\partial_2 K^{(\PhVar,\PhVarA)}}_{\LL^1} + \norm{\partial_{12} K^{(\PhVar,\PhVarA)}}_{\LL^1}\Big] \,d\PhVar.
\end{multline}
If $\discTrans(\Lambda) \osc(\Psi) < 1$, then the family $(\sqrt{a_\lambda}\psi_\lambda)_{\lambda\in\Lambda}$ is a discrete frame for $\hilbert$ with the frame bounds $1- \discTrans(\Lambda) \osc(\Psi)$ and $1+\discTrans(\Lambda) \osc(\Psi)$.
\end{theorem}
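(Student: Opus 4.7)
The plan is to compose Propositions \ref{prop:discretizationSufficientCondition} and \ref{prop:globalKoksmaHlawka}: the first reduces the frame property to an $\LL^1$-type bound on the pointwise quadrature errors $|e(K^{(\PhVar,\PhVarA)},\Lambda)|$, while the second controls each such error by $\discTrans(\Lambda)$ multiplied by a sum of mixed $\LL^1$-seminorms of $K^{(\PhVar,\PhVarA)}$. Chaining the two estimates and recognizing the definition of $\osc(\Psi)$ on the right-hand side should yield the claim directly.

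Concretely, for (almost every) pair $(\PhVar,\PhVarA)\in\RR^2\times\RR^2$ the function $K^{(\PhVar,\PhVarA)} = R(\PhVar,\,\cdot\,)R(\,\cdot\,,\PhVarA)$ has the continuous mixed partial derivatives required by Proposition \ref{prop:globalKoksmaHlawka}, so that proposition applied with $h = K^{(\PhVar,\PhVarA)}$ gives
\[
  \bigl|e(K^{(\PhVar,\PhVarA)},\Lambda)\bigr| \leq \discTrans(\Lambda) \Bigl[\,\norm{\partial_1 K^{(\PhVar,\PhVarA)}}_{\LL^1} + \norm{\partial_2 K^{(\PhVar,\PhVarA)}}_{\LL^1} + \norm{\partial_{12} K^{(\PhVar,\PhVarA)}}_{\LL^1}\Bigr].
\]
Next, I integrate this inequality in $\PhVar$ over $\RR^2$ and take the essential supremum in $\PhVarA$; by the very definition of $\osc(\Psi)$ in (\ref{eq:oscillationOfFrame}) this produces
\[
  \epsilon := \esssup_{\PhVarA\in\RR^2} \int_{\RR^2} \bigl|e(K^{(\PhVar,\PhVarA)},\Lambda)\bigr|\,d\PhVar \leq \discTrans(\Lambda)\,\osc(\Psi).
\]

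Under the hypothesis $\discTrans(\Lambda)\osc(\Psi)<1$ we have $\epsilon < 1$, so Proposition \ref{prop:discretizationSufficientCondition} applies and delivers a frame with bounds $1-\epsilon$ and $1+\epsilon$. Since $1-\discTrans(\Lambda)\osc(\Psi) \leq 1-\epsilon$ and $1+\epsilon \leq 1+\discTrans(\Lambda)\osc(\Psi)$, the weaker but uniform bounds stated in the theorem are valid frame bounds. I anticipate no genuine obstacle here: the argument is essentially a Fubini-plus-$\esssup$ sandwich between the two earlier propositions. The only technical points worth checking are the measurability of $\PhVar\mapsto |e(K^{(\PhVar,\PhVarA)},\Lambda)|$ for a.e.\ $\PhVarA$ and the implicit requirement from Section \ref{sec:koksmaHlawka} that each unit square $\PhVarB+\uSquare$ meets $\Lambda$ (without which $\discTrans(\Lambda)$ would be infinite and the hypothesis vacuously fail), both of which are routine consequences of the standing assumptions.
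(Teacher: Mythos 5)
Your proposal is correct and follows exactly the route the paper intends: the paper itself gives no separate proof beyond stating that the theorem follows by combining Propositions \ref{prop:discretizationSufficientCondition} and \ref{prop:globalKoksmaHlawka}, which is precisely your chaining of the two estimates followed by integrating in $\PhVar$ and taking the essential supremum in $\PhVarA$. Your remark that $1\mp\discTrans(\Lambda)\osc(\Psi)$ are valid (possibly non-optimal) frame bounds since $\epsilon\leq\discTrans(\Lambda)\osc(\Psi)$ is also the right reading of the statement.
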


The discrepancy $\discTrans(\Lambda)$ is a quantitative measure on how uniformly the set $\Lambda$ is spread in $\RR^2$. If we replace $\Lambda$ by $a\Lambda$ with small $a>0$, we can make the discrepancy $\discTrans(a\Lambda)$ as small as we like. However, the structure of $\Lambda$ is of particular importance, as it decides \emph{how fast} $\discTrans(a\Lambda)$ becomes small. A quickly decreasing discrepancy is essential if we want to find discretizations where the set $a\Lambda$ has controllable density.

In Section \ref{sec:admissibleLattices}, we describe certain lattices for which the discrepancy actually has a fast decay, and which can therefore be used effectively in Theorem \ref{thm:sufficientConditionDiscretization}.

\begin{remark}
By assumption, we have $N_\rho\geq 1$ for all $\PhVarB\in\RR^2$, so (\ref{eq:quadratureWeightFormula}) implies that $a_\lambda\leq 1$ for all $\lambda\in\Lambda$. If we further assume $\Lambda$ to be separated, i.e. $N_\PhVarB\leq n$ for some $n\in\NN$, it also follows that $a_\lambda \geq 1/n$ for all $\lambda\in\Lambda$. Thus, the family $(\sqrt{a_\lambda} \psi_\lambda)_{\lambda\in\Lambda}$ is a frame for $\hilbert$ if and only if $(\psi_\lambda)_{\lambda\in\Lambda}$ is one, allowing us to remove the weights from the frame in Theorem \ref{thm:sufficientConditionDiscretization}. Of course, this will impact the frame bounds.
\end{remark}

\begin{remark}\label{rem:weightsForLattices}
If $\Lambda$ is a lattice, we have $a_\lambda = \det(\Lambda)$ for all $\lambda\in\Lambda$, where the determinant of the lattice $\det(\Lambda)$ is defined as the area of a fundamental domain of $\Lambda$. This can be seen by evaluating the integral in (\ref{eq:quadratureWeightFormula}) via periodization with respect to $\Lambda$.
\end{remark}

\section{Discretization of the Short-Time Fourier Transform}

Short-time Fourier transforms (STFT)~\cite{Groechenig2001} and their discretizations, known as Gabor transforms, are central tools of time-frequency signal processing. Furthermore, STFTs are likely the best understood family of continuous frames, with plenty of related discretization results. It seems worthwhile to investigate what can be achieved using QMC in this well-studied setting, though this is probably not the most radical application of our approach.

Given a window function $g\in \LL^2(\RR)$, we define the STFT by $V_g: \LL^2(\RR)\to \LL^2(\RR^2)$,
\[ V_gf(x,\omega) = \int_\RR f(t) \overline{g(t-x)} e^{-2\pi i t \omega} \,dt. \]
Using time-frequency shifts $\pi(x,\omega)g(t) = g(t-x)e^{2\pi i x\omega}$ for $x,\omega\in\RR$, the STFT can be written as $V_gf(x,\omega) = \langle f, \pi(x,\omega) g\rangle$. We assume $\norm{g}_{\LL^2} = 1$, as $V_g$ then becomes a unitary mapping. This is equivalent to $\mathcal{G} = (\pi(\PhVar) g)_{\PhVar\in\RR^2}$ being a continuous Parseval frame for $\LL^2(\RR)$.

Let $\PhVar,\PhVarA,\PhVarB\in\RR^2$. The reproducing kernel of $\mathcal{G}$ is given by
\[ R(\PhVar, \PhVarA) = \langle \pi(\PhVarA) g, \pi(\PhVar) g \rangle = V_g(\pi(\PhVarA)g)(\PhVar). \]
The kernel $K^{(\PhVar, \PhVarA)}(\PhVarB) = R(\PhVar, \PhVarB)R(\PhVarB, \PhVarA)$ is therefore equal to
\begin{align*}
K^{(\PhVar, \PhVarA)}(\PhVarB) %&= \langle \pi(\PhVarB) g, \pi(\PhVar) g \rangle \langle\pi(\PhVarA) g, \pi(\PhVarB) g \rangle \\
&= \overline{V_g (\pi(\PhVar)g)(\PhVarB)} V_g (\pi(\PhVarA)g)(\PhVarB).
\end{align*}
Using the notation $Dg(t) = g'(t)$ and $Zg(t) = 2\pi i t\, g(t)$, some straightforward computations allow to estimate expression (\ref{eq:oscillationOfFrame}) for the STFT by
\begin{multline*} \Omega(\mathcal{G}) \leq 2\Big[ \norm{V_gg}_{\LL^1} \norm{V_gDg}_{\LL^1} + \norm{V_gg}_{\LL^1} \norm{V_gZg}_{\LL^1} \\
\quad+ \norm{V_gg}_{\LL^1} \norm{V_gZDg}_{\LL^1} + \norm{V_gDg}_{\LL^1} \norm{V_gZg}_{\LL^1}\Big].
\end{multline*}
This inequality enables us to apply Theorem \ref{thm:sufficientConditionDiscretization} to the STFT to obtain Gabor frames of $\LL^2(\RR)$, provided that the set $\Lambda$ has sufficiently small discrepancy $\discTrans(\Lambda)$.

\begin{example}
For $\sigma>0$, let $g_\sigma(t) = \sigma^{-1/2} \exp( -\frac{\pi}{2\sigma^2} t^2)$ be the $\LL^2$-normalized Gaussian and $\mathcal{G}_\sigma$ the corresponding continuous frame. Then $\Omega(\mathcal{G}_\sigma)$ can be evaluated exactly as
\[ \Omega(\mathcal{G}_\sigma) = 4\pi \left( \frac{1}{\sqrt{2}\sigma} + \sqrt{2}\sigma \right) + 4\pi^2. \]
It is minimal for $\sigma_0 = \sqrt{2}^{-1}$ with $\Omega(\mathcal{G}_{\sigma_0}) \approx 64.61$.
\end{example}

\section{Discrepancy of Admissible Lattices}\label{sec:admissibleLattices}

There are certain lattices $\Gamma\subset\RR^2$ which have excellent discrepancy, and which are therefore well suited for our purposes. We use the same terminology as \cite{Skriganov1994}.

\begin{definition}\label{def:admissibleLattice}
Let $\Gamma\subset\RR^2$ be a lattice. We call $\Gamma$ \emph{admissible} if
\[ \inf_{(\gamma_1, \gamma_2)\in\Gamma\backslash \{0\} } |\gamma_1 \gamma_2| > 0. \]
\end{definition}

Admissible lattices are considered frequently in QMC integration, see for instance \cite{Ullrich2016,Kacwin2021} and the references therein.

The following proposition is an immediate consequence of \cite[Cor. 2.1]{Skriganov1994}. We write $\Gamma_\tau$, $\tau>0$ for the dilated lattice
\[ \Gamma_\tau = \{ (\tau \gamma_1, \tau^{-1}\gamma_2) \;|\; (\gamma_1, \gamma_2)\in\Gamma \}. \]

\begin{proposition}\label{prop:discrepancyAdmissibleLattice}
Let $\Gamma\subset\RR^2$ be an admissible lattice. Then there is a constant $C=C(\Gamma)$ such that
\begin{equation}\label{eq:admissibleLatticeAsymptotic}
\discTrans(a\Gamma_\tau) \leq C a^2 \ln ( 2+a^{-1} )
\end{equation}
for all $a\in (0,1)$ and all $\tau>0$.
\end{proposition}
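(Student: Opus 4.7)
The plan is to derive this as an immediate corollary of Skriganov's Cor. 2.1 in \cite{Skriganov1994} by absorbing the two additional parameters, $\tau$ and $\rho$, into the standard admissible-lattice framework. First, I would observe that the anisotropic dilation $(\gamma_1,\gamma_2)\mapsto(\tau\gamma_1,\tau^{-1}\gamma_2)$ has unit Jacobian, so $\det(\Gamma_\tau)=\det(\Gamma)$, and preserves the product $\gamma_1\gamma_2$, so the admissibility infimum is the same for $\Gamma_\tau$ as for $\Gamma$. Since Skriganov's bound depends on the lattice only through $\det$ and the admissibility constant, it transfers to the whole family $\{\Gamma_\tau\}_{\tau>0}$ with a single constant $C=C(\Gamma)$.

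Next, I would handle the anchor $\rho$ appearing in $\discStar_\rho$. Because $a\Gamma_\tau$ is a lattice, the anchored star discrepancy depends on $\rho$ only through its coset in $\RR^2/a\Gamma_\tau$, so it suffices to control $\rho$ in a bounded fundamental domain. Skriganov's estimate rests on $\LL^2$-bounds of exponential sums over the dual lattice, which are translation-invariant, so the bound extends uniformly over all translated query boxes; this gives the uniform-in-$\rho$ estimate. With these two reductions, Cor. 2.1 yields $\discTrans(a\Gamma_\tau)\leq C'\,(\log N)/N$ with $N=\#(a\Gamma_\tau\cap[0,1]^2)\sim a^{-2}\det(\Gamma)^{-1}$, producing a bound of order $a^2\log(a^{-1})$ valid for small $a$. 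Replacing $\log(a^{-1})$ by $\log(2+a^{-1})$ absorbs the regime $a$ close to $1$, where the plain logarithm becomes small or negative, and yields the uniform estimate for all $a\in(0,1)$.

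The main obstacle is really the second step: Skriganov's statement is phrased for boxes anchored at the origin, whereas $\discStar_\rho$ ranges over arbitrary translates, and one must confirm that the translation does not degrade the constant. I expect this to follow almost for free from the harmonic-analytic nature of the proof, but the verification requires looking beyond the statement of Cor. 2.1. If translation-uniformity is not explicitly available, a safe fallback is to split each translated rectangle $\rho+[0,x]$ into four axis-aligned sub-rectangles anchored at the four nearest lattice vertices of $a\Gamma_\tau$, apply Cor. 2.1 to each piece, and pay a harmless factor of $4$ in $C$. Finally, a brief bookkeeping step converts the lattice-normalized discrepancy appearing in \cite{Skriganov1994} to our normalization by $N_\rho$; since $N_\rho$ is comparable to $1/\det(a\Gamma_\tau)=a^{-2}\det(\Gamma)^{-1}$ up to the admissibility constant, this only adjusts $C(\Gamma)$.
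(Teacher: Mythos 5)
Your proposal is correct and follows exactly the route the paper takes, which is to read the bound off Skriganov's Corollary~2.1 (the paper gives no further detail, calling it an ``immediate consequence''). The reductions you supply --- invariance of $\det(\Gamma)$ and of the norm form $|\gamma_1\gamma_2|$ under the hyperbolic dilation $\Gamma\mapsto\Gamma_\tau$, translation-uniformity of the box-counting estimate, and the renormalization by $N_\PhVarB\sim a^{-2}\det(\Gamma)^{-1}$ --- are precisely the ones implicitly used, and your worry about translated boxes is unfounded since Skriganov's counting bound already holds for axis-parallel boxes in arbitrary position.
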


If $\Gamma$ is an admissible lattice, the consequences of Proposition \ref{prop:discrepancyAdmissibleLattice} are twofold.

First, if we ignore the dilation by $\tau$ for a moment, we see that $\discTrans(a\Gamma)$ has a fast asymptotic decay for $a\to 0$. For comparison: the discrepancy of the scaled integer lattice $\discTrans(a\ZZ^2)$ only has an asymptotic decay linear in $a$. In two dimensions, the rate of decay (\ref{eq:admissibleLatticeAsymptotic}) is in fact best possible \cite{Roth1954}.

Second, if we allow arbitrary $\tau>0$ again, we see that this decay is \emph{uniform for all dilations of $\Gamma$}. That is, if we write
\[ \discDil(\Gamma) = \sup_{\tau>0} \discTrans(\Gamma_\tau), \]
then $\discDil(a\Gamma)$ also has the asymptotic decay (\ref{eq:admissibleLatticeAsymptotic}).

We can utilize the dilation invariance in the following way: For $h\in \LL^1(\RR^2)$ with continuous mixed partial derivatives, we define the dilation of $h$ by $h_\tau(\PhVar) = h(\tau\PhVar_1, \tau^{-1}\PhVar_2)$ where $\PhVar=(\PhVar_1,\PhVar_2)\in\RR^2$. It is easy to see that $e(h_\tau, a\Gamma) = e(h, a\Gamma_\tau)$ (remember that, according to remark \ref{rem:weightsForLattices}, the weights corresponding to $\Gamma$ and $\Gamma_\tau$ are all equal to $\det(\Gamma)=\det(\Gamma_\tau)$ ). Due to Proposition \ref{prop:globalKoksmaHlawka} we have the estimate
\begin{align}
|e(h_\tau, a\Gamma)| &\leq \discTrans(a\Gamma_\tau) \left[ \,\norm{\partial_1 h}_{\LL^1} + \norm{\partial_2 h}_{\LL^1} + \norm{\partial_{12} h}_{\LL^1}\right] \nonumber\\
&\leq \discDil(a\Gamma) \left[ \,\norm{\partial_1 h}_{\LL^1} + \norm{\partial_2 h}_{\LL^1} + \norm{\partial_{12} h}_{\LL^1}\right].  \label{eq:quadratureWithDilation}
\end{align}
Thus, for $a\to 0$, the error $e(h_\tau, a\Gamma)$ decays fast and uniformly in $\tau$.

Let us apply this to the STFT. If we replace the window function $g\in\LL^2(\RR)$ by its $\LL^2$-normalized dilation $g_\tau(t) = \tau^{-1/2} g(\tau^{-1} t)$, the kernel $K^{(\PhVar,\PhVarA)}(\PhVarB)$ becomes
\[ K^{\left( (\tau^{-1}\PhVar_1, \tau \PhVar_2), (\tau^{-1}\PhVarA_1, \tau\PhVarA_2) \right)} ( \tau^{-1}\PhVarB_1, \tau\PhVarB_2). \]
The Schur norm in Proposition \ref{prop:discretizationSufficientCondition} is not affected by the dilations of $\PhVar$ and $\PhVarA$. Thus, combining Proposition \ref{prop:discretizationSufficientCondition} with inequality (\ref{eq:quadratureWithDilation}) yields the following variation of Theorem \ref{thm:sufficientConditionDiscretization}.

\begin{proposition}
Let $\Gamma\subset\RR^2$ be an admissible lattice and assume that $\discDil(\Gamma)\Omega(g) < 1$. Then, for all $\tau>0$, the family
\[ (\sqrt{\det(\Gamma)} \pi(\gamma)g_\tau)_{\gamma\in\Gamma} \]
is a Gabor frame with frame bounds $1-\discDil(\Gamma)\Omega(g)$ and $1+\discDil(\Gamma)\Omega(g)$ which are uniform in $\tau$.
\end{proposition}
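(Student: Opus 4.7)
The plan is to apply Proposition~\ref{prop:discretizationSufficientCondition} to the continuous Parseval frame $\mathcal{G}_\tau = (\pi(\eta)g_\tau)_{\eta\in\RR^2}$ with sampling set $\Lambda = \Gamma$. By Remark~\ref{rem:weightsForLattices} the associated weights are all equal to $\det(\Gamma)$, which accounts for the factor $\sqrt{\det(\Gamma)}$ in the statement. It then remains to verify that
\[ \esssup_{\nu\in\RR^2}\int_{\RR^2}\bigl|e(K_\tau^{(\eta,\nu)}, \Gamma)\bigr|\,d\eta \leq \discDil(\Gamma)\,\Omega(g), \]
where $K_\tau^{(\eta,\nu)}$ denotes the composed kernel associated to the dilated window $g_\tau$.

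First I would invoke the kernel identity recorded in Section~\ref{sec:admissibleLattices}, which states
\[ K_\tau^{(\eta,\nu)}(\rho) = K^{(\tilde\eta,\tilde\nu)}\bigl(\tau^{-1}\rho_1,\tau\rho_2\bigr), \]
with $\tilde\eta = (\tau^{-1}\eta_1,\tau\eta_2)$ and $\tilde\nu = (\tau^{-1}\nu_1,\tau\nu_2)$. In the notation of that section this reads $K_\tau^{(\eta,\nu)} = h_{\tau^{-1}}$ for $h = K^{(\tilde\eta,\tilde\nu)}$. Since the weights assigned to $\Gamma$ and $\Gamma_{\tau^{-1}}$ coincide (Remark~\ref{rem:weightsForLattices}), the identity $e(h_{\tau^{-1}},\Gamma) = e(h,\Gamma_{\tau^{-1}})$ holds as in the paragraph preceding~(\ref{eq:quadratureWithDilation}). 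Applying that very inequality with $a=1$ and $\tau$ replaced by $\tau^{-1}$ yields
\begin{align*}
\bigl|e(K_\tau^{(\eta,\nu)},\Gamma)\bigr| &\leq \discDil(\Gamma) \Bigl[\,\norm{\partial_1 K^{(\tilde\eta,\tilde\nu)}}_{\LL^1} \\
&\quad+ \norm{\partial_2 K^{(\tilde\eta,\tilde\nu)}}_{\LL^1} + \norm{\partial_{12} K^{(\tilde\eta,\tilde\nu)}}_{\LL^1}\Bigr].
\end{align*}

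Next I would integrate over $\eta$ and take the essential supremum over $\nu$. The crucial observation is that the linear map $(\eta_1,\eta_2)\mapsto(\tau^{-1}\eta_1,\tau\eta_2)$ has Jacobian determinant equal to $1$, and likewise for $\nu$; it is therefore a measure-preserving bijection of $\RR^2$. Consequently the substitutions $\eta\mapsto\tilde\eta$ and $\nu\mapsto\tilde\nu$ leave both the outer integral and the essential supremum invariant, and the right-hand side collapses to $\discDil(\Gamma)\,\Omega(g)$ by the definition~(\ref{eq:oscillationOfFrame}). The hypothesis $\discDil(\Gamma)\,\Omega(g)<1$ then lets Proposition~\ref{prop:discretizationSufficientCondition} produce a discrete frame with bounds $1\pm\discDil(\Gamma)\,\Omega(g)$ that are manifestly independent of $\tau$.

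The main obstacle is the careful bookkeeping of the two intertwined dilations — one on the window (via $g_\tau$) and one on the lattice (via $\Gamma_{\tau^{-1}}$) — together with the realization that the underlying coordinate map $(x,\omega)\mapsto(\tau^{-1}x,\tau\omega)$ is symplectic in the sense that it preserves planar Lebesgue measure. This single fact is what simultaneously lets $\Omega(g)$ appear on the right-hand side with no extraneous $\tau$-dependent prefactor and lets $\discTrans(\Gamma_{\tau^{-1}})$ be absorbed into the $\tau$-free quantity $\discDil(\Gamma)$.
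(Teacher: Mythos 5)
Your proposal is correct and follows essentially the same route as the paper: the paper's (very terse) justification is exactly the kernel identity $K_\tau^{(\PhVar,\PhVarA)}=h_{\tau^{-1}}$ with $h=K^{(\tilde\PhVar,\tilde\PhVarA)}$, the transfer $e(h_{\tau^{-1}},\Gamma)=e(h,\Gamma_{\tau^{-1}})$ combined with inequality~(\ref{eq:quadratureWithDilation}), and the observation that the measure-preserving substitutions in $\PhVar$ and $\PhVarA$ leave the Schur-type supremum-integral, and hence $\Omega(g)$, unchanged. You have merely spelled out the bookkeeping that the paper leaves implicit.
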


\begin{example}
If we denote the golden ratio by $\varphi$, the lattice
\[ \Gamma = \left(\begin{matrix} 1 & \varphi^{-1} \\ -\varphi^{-1} & 1 \end{matrix}\right)\ZZ^2 \]
is admissible. In \cite{Holighaus2023}, the authors used this lattice (and implicitly its admissibility) to discretize the wavelet transform through the oscillation method.

In more generality, given $r,s,u,v\in\RR\backslash\{0\}$, the lattice
\[ \Gamma = \left(\begin{matrix} r & s \\ u & v \end{matrix}\right)\ZZ^2 \]
is admissible if and only if $r/s$ and $u/v$ are distinct irrational \emph{badly-approximable numbers} \cite[Ex. 2.10]{Matousek2010}\footnote{The exercise erroneously states that the quotients $r/u$ and $s/v$ have to be considered for the equivalence.} (see \cite[App. D]{Bugeaud2012} for a definition of badly-approximable numbers).
\end{example}

\begin{remark}
The generalization of our theory to $\RR^d$ for arbitrary $d\in\NN$ is straightforward. For example, if we assume $h\in L^1(\RR^d)$ to have continuous mixed partial derivatives, and if the definition of $\discTrans(\Lambda)$ is extended to $\Lambda\subset\RR^d$ in the obvious way, Proposition \ref{prop:globalKoksmaHlawka} becomes
\[ |e(h,\Lambda)| \leq \discTrans(\Lambda) \sum_{\emptyset\neq u\subset \{1,\dots, d\}} \norm{\partial_u h}_{L^1}. \]
\end{remark}

\bibliographystyle{IEEEtran}
\bibliography{bibliography}

\end{document}